\theoremstyle{plain}
\theoremstyle{definition}
\newtheorem{theorem}{Theorem}[section]
\newtheorem{proposition}[theorem]{Proposition}
\newtheorem*{thm*}{Theorem}
\newtheorem*{dfn*}{Definition}
\newtheorem*{cor*}{Corollary}
\newtheorem*{prp*}{Proposition}
\newtheorem*{rmk*}{Remark}
\newtheorem*{qst}{Question}
\def\XXint#1#2#3{{\setbox0=\hbox{$#1{#2#3}{\int}$ }
\vcenter{\hbox{$#2#3$ }}\kern-.6\wd0}}
\long\def\symbolfootnote[#1]#2{\begingroup%
\def\thefootnote{\fnsymbol{footnote}}\footnote[#1]{#2}\endgroup}
\begin{document}
\title{Operators on  Bourgain-Delbaen's spaces.}
\author[D. Puglisi]{Daniele Puglisi}
\address{Department of Mathematics and Computer Sciences, University of Catania,  Catania, 95125, Italy (EU)}
\email{dpuglisi@dmi.unict.it}

\keywords{Compact operators.  Bourgain-Delbaen's space}
\date{}

\maketitle

\symbolfootnote[0]{\textit{2010 Mathematics Subject
Classification:} Primary 47B10, }


\symbolfootnote[0]{}

\symbolfootnote[0]{The author was supported by ``National Group for Algebraic and Geometric Structures, and their Applications'' (GNSAGA - INDAM).}

\begin{abstract}
We prove that whenever we pick real numbers $a,b$ such that $0<b<a<1$, $a+b>1$ and $a^3+b^3=1$, then every operator from $\ell_2$ to $X_{a,b}$ and from  $X_{a,b}$ to  $\ell_2$ must be compact, where  $X_{a,b}$ is the  Bourgain-Delbaen's space.
\end{abstract}

\section{Introduction}
This note is concerning a question raised to us by R. Aron during the conference "{\em Function Theory on Infinite Dimensional Spaces XIV}. 08 - 11 February 2016, Madrid. We were told that  the same question was raised during some other conference in 1987 and it was solved  by J. Bourgain  soon after. Since at that time J. Bourgain decided to not published it,  the result  remains only announced in \cite{AAF}. \smallskip

Throughout this note we shall use a standard notation. For Banach spaces $X,Y$ we denote by $\mathcal{L}(X, Y)$ the space of all bounded linear operators from $X$ to $Y$ and by $\mathcal{K}(X, Y)$ its ideal of compact operators. It is a classical Pitt's result \cite{Pitt} that every linear continuous operator from $\ell_p$ to $\ell_q$ must be compact for all $q<p$ .  It is worth to say here that this result has been generalized  in \cite[Lemma 3.5]{Pu} where $\ell_p$-direct sum of sequences of suitable Banach spaces have been used. Following Pitt's theorem, in 
\cite[Lemma 5]{AAF}  it was proved the folllowing
\begin{proposition}
Suppose a Banach space $X$ has the property that for some $p>1$, $\mathcal{L}(X,\ell_p) = \mathcal{K}(X,\ell_p)$. Then $\mathcal{L}(X,\ell_q) = \mathcal{K}(X,\ell_q)$ for all $1<q<p$.
\end{proposition}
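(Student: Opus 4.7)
I argue by contradiction: assuming some $T \in \mathcal{L}(X, \ell_q)$ fails to be compact, I construct an operator $S \in \mathcal{L}(\ell_q, \ell_p)$ such that $S \circ T: X \to \ell_p$ is also non-compact, contradicting the hypothesis that $\mathcal{L}(X, \ell_p) = \mathcal{K}(X, \ell_p)$.

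Since $\ell_q$ is reflexive ($q > 1$), if $T$ is not compact, a standard extraction produces a sequence $(x_n) \subset B_X$ and a $z \in \ell_q$ with $T x_n \to z$ weakly while $\|T x_n - z\|_q \geq \delta > 0$. The differences $w_n := T x_n - z$ then form a weakly null, semi-normalized sequence in $\ell_q$, and the Bessaga--Pelczynski selection principle allows me to pass to a further subsequence so that $\|w_n - v_n\|_q \to 0$ for some normalized block basis $(v_n)$ of the unit vector basis of $\ell_q$, which is then isometrically equivalent to this basis.

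The key construction is a bounded operator $S: \ell_q \to \ell_p$ sending each $v_n$ to the unit vector $e_n^{(p)}$ of $\ell_p$. Because $(v_n)$ is a normalized block basis of $\ell_q$, the closed span $[v_n]$ is isometric to $\ell_q$ and $1$-complemented in $\ell_q$ by a natural block projection $P$; and because $q < p$, the pointwise inequality $(\sum |a_n|^p)^{1/p} \leq (\sum |a_n|^q)^{1/q}$ shows that the linear map $\psi : [v_n] \to \ell_p$ with $\psi(v_n) = e_n^{(p)}$ is bounded (of norm $\leq 1$). Setting $S := \psi \circ P$ completes the construction.

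By hypothesis, $S \circ T : X \to \ell_p$ is compact. On the other hand,
\[
S(T x_n) \;=\; \psi(v_n) \,+\, S(w_n - v_n) \,+\, S(z) \;=\; e_n^{(p)} \,+\, o(1) \,+\, S(z),
\]
and $\|e_n^{(p)} - e_m^{(p)}\|_p = 2^{1/p}$ for $n \neq m$ prevents $(S T x_n)$ from having any norm-convergent subsequence---a contradiction. The step I expect to require most care is the construction of $S$: it depends on the $1$-complementation of block subspaces of $\ell_q$ together with the elementary $\ell_p$--$\ell_q$ comparison, and requires having a genuine (rather than merely approximate) block basis after the Bessaga--Pelczynski extraction, so that the projection $P$ is literally available on all of $\ell_q$.
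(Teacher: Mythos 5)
The paper does not actually prove this proposition: it is quoted from \cite[Lemma 5]{AAF} and used as motivation, so there is no in-paper argument to compare against. Judged on its own, your proof is correct and is the standard route one would expect. The contradiction scheme is sound: a non-compact $T:X\to\ell_q$ yields, via reflexivity, a bounded sequence with $Tx_n\rightharpoonup z$ and $\|Tx_n-z\|_q\geq\delta$; Bessaga--Pe{\l}czy{\'n}ski perturbs $w_n=Tx_n-z$ to a genuine block basis; the block projection $P$ onto the span of the blocks has norm one in $\ell_q$; and the map sending the blocks to the unit vectors of $\ell_p$ is bounded precisely because $q<p$ gives $\|\cdot\|_p\leq\|\cdot\|_q$ on coefficient sequences. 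Your closing remark is also well taken: one genuinely needs the ``blocks to unit vectors'' map $\psi$ here, since the formal inclusion $\ell_q\hookrightarrow\ell_p$ alone would not preserve non-compactness (semi-normalized blocks in $\ell_q$ can tend to zero in $\ell_p$).

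One small imprecision worth fixing: after the Bessaga--Pe{\l}czy{\'n}ski extraction you get $\|w_{n_k}-u_k\|_q\to 0$ for a \emph{semi-normalized} block sequence $(u_k)$, and you cannot in general also arrange $\|w_{n_k}-v_k\|_q\to 0$ with $v_k=u_k/\|u_k\|_q$ \emph{normalized}, unless $\|w_{n_k}\|_q\to 1$. This costs nothing: either work directly with the semi-normalized blocks $u_k$ (the projection $P$ and the comparison $\|\cdot\|_p\leq\|\cdot\|_q$ are unaffected, and $S(Tx_{n_k})=\|u_k\|_q\,e_k^{(p)}+o(1)+S(z)$ still has uniformly separated terms since $\|u_k\|_q\geq\delta/2$ eventually), or first pass to a subsequence along which $\|w_{n_k}\|_q$ converges. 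With that cosmetic adjustment the argument is complete.
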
  
This proposition shows that there are non-trivial examples of triple $(X,Y,Z)$ of Banach spaces with the property that if every continuous linear operator from $X$ to $Y$ is compact and every continuous linear operator from $Y$ to $Z$ is compact, then every continuous linear operator from $X$ to $Z$ is compact. In case this happens, we say that the triple $(X,Y,Z)$  has the {\em compact operators transitivity property}. Therefore, it is quite natural to raise the following
\begin{qst}[R. Aron, 1987]
Does every triple $(X,Y,Z)$ of Banach spaces has the compact operators transitivity property?
\end{qst}
As announced in \cite{AAF}, the answer was given in negative by J. Bourgain and in this note we propose a proof of such result. Another negative solution of the above question was given in \cite{AH}, where, if we denote by $\mathfrak{X}_{AH}(L)$ the  Argyros-Haydon  space constructed on the infinite subset $L \subseteq \mathbb{N}$,
the following spectacular result holds
\begin{theorem}
If $L \cap L^\prime$ is finite, then every bounded linear operator from  $\mathfrak{X}_{AH}(L)$ to  $\mathfrak{X}_{AH}(L^\prime)$ must be compact.
\end{theorem}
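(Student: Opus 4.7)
The plan is to argue by contradiction using the Rapidly Increasing Sequence (RIS) machinery and the basic inequality that underlie the Argyros--Haydon construction. Suppose $T \colon \mathfrak{X}_{AH}(L) \to \mathfrak{X}_{AH}(L')$ is bounded and non-compact; then there exist $\delta>0$ and a normalized weakly null sequence $(x_n)$ in $\mathfrak{X}_{AH}(L)$ with $\|Tx_n\| \geq \delta$. Using a standard sliding-hump perturbation argument with respect to the Bourgain--Delbaen basis in each space, one may pass to a subsequence and assume $(x_n)$ is a normalized block sequence in $\mathfrak{X}_{AH}(L)$ and that $(Tx_n)$ is, up to arbitrarily small error, a block sequence in $\mathfrak{X}_{AH}(L')$ with norms bounded below by $\delta/2$.

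The next step is to refine simultaneously. By the Argyros--Haydon selection procedure, one can extract a subsequence so that $(x_n)$ is an RIS in $\mathfrak{X}_{AH}(L)$ with respect to the parameters $(m_j,n_j)_{j \in L}$, and by a diagonal extraction arrange that $(Tx_n)$ is likewise an RIS in $\mathfrak{X}_{AH}(L')$ with respect to $(m_k,n_k)_{k \in L'}$. For large $j \in L$, form the averaged vector
\[
u_j \;=\; \frac{1}{n_j}\sum_{i=1}^{n_j} x_i,
\]
with the $x_i$ chosen so that each has weight much bigger than $m_j$. The basic inequality applied in $\mathfrak{X}_{AH}(L)$ yields $\|u_j\| \lesssim 1/m_j$, and therefore $\|T u_j\| \leq \|T\|/m_j \to 0$.

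On the other hand, $T u_j = \tfrac{1}{n_j}\sum_{i=1}^{n_j} T x_i$ is the average of an RIS in $\mathfrak{X}_{AH}(L')$ whose members have norms at least $\delta/2$. Computing its norm from below using an appropriate functional of weight $1/m_k$ with $k \in L'$, the basic inequality in $\mathfrak{X}_{AH}(L')$ produces an estimate of the form $\|T u_j\| \gtrsim \delta/m_k$ whenever the parameter $k \in L'$ is close (in the Argyros--Haydon sense) to $j$. The decisive use of the hypothesis $|L\cap L'|<\infty$ comes here: since the sequence $(m_j)$ grows fast enough and $L$ is, up to a finite set, disjoint from $L'$, for all sufficiently large $j \in L$ there is no $k \in L'$ within the relevant tolerance of $j$, and the two bounds on $\|T u_j\|$ become incompatible.

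The main obstacle is this last coordination step. The RIS/basic-inequality calculus is a statement about sequences that are genuinely compatible with the specific Bourgain--Delbaen construction of the space in which they live; when one applies a general bounded operator $T$, one must verify that $(Tx_n)$ can indeed be upgraded to an RIS in $\mathfrak{X}_{AH}(L')$ (or perturbed to one), and then make the mismatch between the weight families $\{m_j : j\in L\}$ and $\{m_k : k \in L'\}$ quantitative enough -- using the rapid growth conditions built into the Argyros--Haydon parameters -- to actually force the desired contradiction rather than merely a tension between estimates.
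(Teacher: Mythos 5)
First, a point of order: the paper does not prove this theorem at all --- it is quoted from Argyros--Haydon \cite{AH} purely as motivation in the introduction, so there is no in-paper proof to compare your attempt against; your proposal has to be judged against the argument in \cite{AH} itself.

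Measured that way, you have correctly identified the toolkit (block perturbation, RIS extraction in both spaces, the basic inequality, near-disjointness of the weight families), but the step you flag as ``the main obstacle'' is not a deferred technicality --- it is the entire proof, and the mechanism you propose in its place does not close. Concretely: your upper bound gives $\|Tu_j\| \leq C\|T\|/m_j$, while your lower bound, when it exists at all, has the form $\|Tu_j\| \gtrsim \delta/m_k$ for some $k \in L'$ with $n_k \geq n_j$, hence $m_k \geq m_j$; these two estimates are perfectly compatible. And in the regime you actually invoke --- no suitable $k \in L'$ near $j$ --- you do not obtain an incompatibility, you simply lose the lower bound altogether, so both estimates merely assert that $\|Tu_j\|$ is small, which proves nothing. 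The genuine argument in \cite{AH} runs through dependent sequences: one builds exact pairs $(x^{(i)}, \eta_i)$ with the vectors $x^{(i)}$ in $\mathfrak{X}_{AH}(L)$ and the functionals $\eta_i$ in $\mathfrak{X}_{AH}(L')^*$, coupled by the coding function $\sigma$ into a special functional of odd weight $m_{2j_0+1}^{-1}$. The hypothesis that $L \cap L'$ is finite is what guarantees that no functional of $\mathfrak{X}_{AH}(L)^*$ can reproduce this special structure on the domain side, so the dependent sequence obeys the stronger mismatched-weight upper estimate $\bigl\|\sum_i x^{(i)}\bigr\| \lesssim n_{2j_0+1} m_{2j_0+1}^{-2}$, while the special functional yields $\bigl\|T\bigl(\sum_i x^{(i)}\bigr)\bigr\| \gtrsim \delta\, n_{2j_0+1} m_{2j_0+1}^{-1}$; comparing gives $m_{2j_0+1} \lesssim \|T\|/\delta$, false for large $j_0$. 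Without this dependent-sequence step your two bounds never collide. (Your intermediate claim that $(Tx_n)$ can be upgraded to an RIS in the target space also relies on a specific lemma of \cite{AH} about images of RIS's under bounded operators; that part is fixable but should be cited rather than assumed.)
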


\section{The Bourgain-Delbaen's space}

In this section we would like to review briefly the construction of the classical Bourgain-Delbaen's space $X_{a,b}$  using the original notation (see \cite{BD}), even if we recommend the reader the Ph.D. Thesis of  M. Tarbard \cite{T}, where the  exposition is very clear. \smallskip

For a  $\lambda>1$ let us fix from now on real numbers $a, b$ such that
\begin{enumerate}
\item[($1$)] $0<b<a<1$,
\item[($2$)] $a+2b\lambda \leq \lambda$,
\item[($3$)] $a+b >1$.
\end{enumerate} 

By induction, first we construct  a sequence $(d_n)_n \subseteq \mathbb{R}$ and 
$$i_{m, l}: \hbox{span}\{e_i: \ 1 \leq i \leq d_m\} \longrightarrow   \hbox{span}\{e_i: \ 1 \leq i \leq d_l\} $$
with $m<l \leq n$, where $e_i$ is the standard basic sequence, i.e.  $e_i(j)=\delta_{i,j}$, as follows.\smallskip

Let $d_1=1$. Suppose $d_1, \dots, d_n$ are known as well as $\{i_{m, l}: \ 1 \leq m < l \leq n\}$ satisfying 
\begin{enumerate}
\item[($a$)] $\pi_m \circ i_{m, n} = \hbox{id}_{\hbox{span}\{e_i: \ 1 \leq i \leq d_m\}}$ for $m < n$,
\item[($b$)] $i_{l, n} \circ i_{m, l} = i_{m, n}$, for $m < l < n$,
\end{enumerate}
where $\pi_m$ is the natural projection to the first $d_m$ coordinates. For all 
\begin{align*}
\gamma &= (m, i, j, \varepsilon^\prime, \varepsilon^{\prime \prime}), \ \hbox{with} \  m<n,\\
&  1 \leq i < d_m, \ 1 \leq j \leq  d_n \ \hbox{and} \  \varepsilon^\prime, \varepsilon^{\prime \prime} = \pm 1
\end{align*}
let us define a functional in $(\hbox{span}\{e_i: \ 1 \leq i \leq d_n\})^*$ as
$$c_\gamma^* (x) =  \varepsilon^\prime a x_i + \varepsilon^{\prime \prime} b (x - \pi_m \circ i_{m, n} (x))_j.$$
Let us consider the set of all such functionals, namely 
\begin{align*}
\mathcal{F}_n= \{c_\gamma^*: \ \gamma = (m, i, j, \varepsilon^\prime, \varepsilon^{\prime \prime}),\ m<n, \  1 \leq i < d_m, \ 1 \leq j \leq  d_n \ \hbox{and} \  \varepsilon^\prime, \varepsilon^{\prime \prime} = \pm 1\};
\end{align*}
then, we define
$$d_{n+1} = d_n + \ \hbox{card}\mathcal{F}_n$$
and
$$i_{n,n+1}(x) := ( \underbrace{x_1, \dots, x_{d_n}, (c_\gamma^*(x))_{\gamma \in \mathcal{F}_n}}_{d_{n+1}}, 0, 0, \dots, 0, \dots).$$
For each $n \in \mathbb{N}$ let us  consider
$$i_n :  \hbox{span}\{e_i: \ 1 \leq i \leq d_n\} \longrightarrow   \ell_\infty$$
defined as
$$i_n = \lim_{m \rightarrow \infty} (i_{n, n+1} \circ  i_{n+1, n+2} \circ \dots \circ  i_{m-1, m}). $$
It has been shown by J. Bourgain and F. Delbaen \cite{BD} that $\|i_n\| \leq \lambda$ for all $n \in \mathbb{N}$. Therefore, if we denote by
$$F_n = i_n( \hbox{span}\{e_i: \ 1 \leq i \leq d_n\}) \subseteq \ell_\infty,$$
the space of Bourgain-Delbaen $X_{a,b}$ is defined as the closure in $\ell_\infty$ of the countable union
$$\bigcup_{n \in \mathbb{N}} F_n.$$ 
By construction, it follows that the Banach-Mazur distance $d(F_n, \ell_\infty^{d_n}) \leq \lambda$ for each $n \in \mathbb{N}$. Thus, 
\begin{enumerate}
\item[($\alpha$)]  $X_{a,b}$ is a $\mathcal{L}_\infty$-space. 
\end{enumerate}
Curiously, the space $X_{a,b}$  satisfies the following properties, which make the space very interesting:
\begin{enumerate}
\item[($\beta$)]$X_{a,b}$ does not contain any copy of $\ell_1$, which implies that $X_{a,b}^*$ is isomorphic to  $\ell_1$;
\item[($\gamma$)]  $X_{a,b}$ has the Radon-Nikodym property;
\item[($\delta$)]  $X_{a,b}$ is reflexive saturated (i.e., every infinite dimensional closed subspace contains an infinite dimensional  reflexive subspace).
\end{enumerate}
Before to close this section, we would like to review a last peculiar property. Let $\alpha$ be the unique number such that
$$a^{\frac{1}{1-\alpha}} +b^{\frac{1}{1-\alpha}} =1.$$
Then, in $X_{a,b}$ the following holds. 
\begin{proposition}\label{main BD}
For every sequence $(x_n)_n \subseteq X_{a, b}$ such that 
\begin{enumerate}
\item[($i$)] $\|x_n\|=1$ for all $n \in \mathbb{N}$,
\item[($ii$)] $x_n \longrightarrow 0$ weakly,
\end{enumerate}
there exist $C>0$ and a subsequence $(x_{k_n})_n$ such that 
\begin{equation}\label{main ineq}
\|x_{k_1} + \dots + x_{k_n}\| \geq C n^{\alpha}, \quad \forall n \in \mathbb{N}.
\end{equation}
\end{proposition}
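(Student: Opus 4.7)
The plan is to construct, for each $n$, a single coordinate functional $e_J^* \in X_{a,b}^*$ — which is automatically of norm $\leq 1$, since $X_{a,b}$ is a closed subspace of $\ell_\infty$ — such that $e_J^*(x_{k_1} + \cdots + x_{k_n}) \geq C n^\alpha$. The deep coordinate $J$ will be chosen by descending an unbalanced binary tree inside the Bourgain-Delbaen branching structure, with splitting ratio $a^{1/(1-\alpha)} : b^{1/(1-\alpha)}$ dictated by the defining equation for $\alpha$. As a first step, a standard gliding-hump argument with respect to the natural finite-dimensional decomposition $X_{a,b} = \overline{\bigcup_m F_m}$ reduces the weakly null sequence to a block-like one: I extract a subsequence (still denoted $(x_n)$) and find a rapidly increasing sequence $m_1 < m_2 < \cdots$ in $\mathbb{N}$ such that each $x_n$ is, up to a small norm perturbation, in $F_{m_n}$ with $\|\pi_{m_{n-1}} x_n\|_\infty$ negligible. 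Since $\|x_n\|_\infty = 1$, I then pick $j_n$ with $d_{m_{n-1}} < j_n \leq d_{m_n}$ and $|x_n(j_n)| \geq 1/2$, and by changing signs arrange $x_n(j_n) \geq 1/2$.

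For $n$ given, I split $\{1, \ldots, n\}$ into consecutive blocks $L$ and $R$ with $|L| \approx n \cdot a^{1/(1-\alpha)}$ and $|R| \approx n \cdot b^{1/(1-\alpha)}$, and recursively build tree functionals $\phi_L = e_P^*$ and $\phi_R = e_Q^*$ at deep coordinates. I combine them into a new tree functional by choosing $c_\gamma^* \in \mathcal{F}_N$ with $\gamma = (M, P, Q, +, +)$, where $M$ sits just above the maximum Bourgain-Delbaen level appearing in the construction of $\phi_L$ and $N$ sits just above that of $\phi_R$. The admissibility conditions $1 \leq P < d_M$ and $1 \leq Q \leq d_N$ hold provided the gaps between consecutive $m_n$'s are chosen sufficiently large. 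Because of the essentially block support of the $x_n$'s, evaluating the combined functional on $x_{k_s}$ yields approximately $a \cdot \phi_L(x_{k_s})$ when $k_s \in L$ and $b \cdot \phi_R(x_{k_s})$ when $k_s \in R$: the cross-terms $x_{k_s}(P)$ for $k_s \in R$ (and $x_{k_s}(Q)$ for $k_s \in L$) vanish by block disjointness, while the reflection $(i_{M,N} \pi_M x_{k_s})_Q$ is negligible since either $\pi_M x_{k_s} \approx 0$ when $k_s \in R$, or $Q$ lies outside the essential support of $x_{k_s}$ when $k_s \in L$. Setting $S(n) = \phi_n(\sum_{s=1}^n x_{k_s})$, one obtains the recursion $S(n) \geq a S(|L|) + b S(|R|)$ up to negligible error, which by the algebraic identity $a p^\alpha + b(1-p)^\alpha = 1$ at $p = a^{1/(1-\alpha)}$ yields $S(n) \geq C n^\alpha$ inductively.

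The main technical obstacle is the error bookkeeping. The block support of each $x_n$ is only approximate, so the reflection terms $(i_{M,N} \pi_M x)_Q$ appearing at every internal node of the tree are not exactly zero, and these errors accumulate along a tree of depth $O(\log n)$. One must fix the perturbations and the spacings $m_n$ from the start so that the total perturbation of $S(n)$ stays $o(n^\alpha)$. The uniform bound $\|i_n\| \leq \lambda$ coming from condition (2), together with the freedom to make the $d_m$'s grow arbitrarily fast, is precisely what allows this bookkeeping to close.
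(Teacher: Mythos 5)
First, a point of comparison: the paper gives no proof of this proposition at all --- it is stated as a known property of $X_{a,b}$ quoted from the original construction in \cite{BD}. So your attempt can only be measured against Bourgain and Delbaen's own argument, and what you describe is essentially that argument: a gliding hump reducing to essentially block vectors, a single norm-one coordinate functional $e_J^*$ whose unravelling is an unbalanced binary tree of $c_\gamma^*$'s with split ratio $a^{1/(1-\alpha)}:b^{1/(1-\alpha)}$, and the identity $ap^{\alpha}+b(1-p)^{\alpha}=1$ at $p=a^{1/(1-\alpha)}$ closing the induction. The skeleton is correct.

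Two points need repair, one of them substantive. It concerns your justification of why the $b$-part of $c_\gamma^*$ does not pollute the contribution of the vectors with $k_s\in L$. You assert that $x_{k_s}(Q)$ ``vanishes by block disjointness'' and that $(i_{M,N}\pi_M x_{k_s})_Q$ is negligible because ``$Q$ lies outside the essential support of $x_{k_s}$.'' Neither is true: elements of $F_M$ are not finitely supported in $\ell_\infty$ --- the extension operators propagate mass to all later coordinates --- so both $x_{k_s}(Q)$ and $(i_{M,N}\pi_M x_{k_s})_Q$ can individually be of order $\lambda$. What actually happens is an exact cancellation: if $x_{k_s}\in F_M$ then $x_{k_s}=i_M\pi_M x_{k_s}$, hence $\bigl(x_{k_s}-i_{M,N}\pi_M x_{k_s}\bigr)_Q=0$ for every $Q\leq d_N$, up to $\lambda$ times the norm of the perturbation by which you pushed $x_{k_s}$ into $F_M$. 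This cancellation is the entire reason the subtracted term appears in the definition of $c_\gamma^*$; with your stated justification the per-node error would be $O(1)$ per vector rather than controlled by the perturbations, and the bookkeeping you correctly identify as the crux would not close. The second, smaller point: you cannot ``change signs'' of individual $x_n$'s, since that changes the vector $x_{k_1}+\dots+x_{k_n}$ whose norm is being bounded from below; either pass to a further subsequence on which $\sgn x_n(j_n)$ is constant (a global flip is then harmless), or use the sign parameters $\varepsilon',\varepsilon''$ in $\gamma$, which exist precisely so that the tree functional can be assembled to collect all leaf contributions with positive sign.
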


\section{Operators on the space $X_{a,b}$}
Throughout this section we fix  $a,b$ such that 
\begin{enumerate}
\item[($i$)]$0<b<a<1$, 
\item[($ii$)]$a+b>1$,  
\item[($iii$)]$a^3+b^3=1$.
\end{enumerate}
Before to state the main result, we would like to recall a well known result. Since the proof is easy, we insert it for the sake of completeness. 
\begin{proposition}
Let $X$ be a Banach space with a Schauder basis, $Y$ any Banach space and let $T:X \longrightarrow Y$ be a bounded linear operator. If 
\begin{align}\label{1}
\|T(x_n)\| \stackrel{n \rightarrow \infty}{\longrightarrow} 0 \ \hbox{for all bounded block basic sequence} \ (x_n)_n,
\end{align}
then $T$ is compact.
\end{proposition}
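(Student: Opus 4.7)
The plan is to argue by contrapositive: assuming $T$ is not compact, I will produce a bounded block basic sequence $(w_k)$ on which $\|T\cdot\|$ stays bounded below, contradicting \eqref{1}. Since $T$ fails to be compact, the set $T(B_X)$ is not totally bounded, so a standard extraction yields $\varepsilon>0$ and a bounded sequence $(x_n)\subseteq X$ with $\|Tx_n - Tx_m\|\geq \varepsilon$ for every $n\neq m$. Let $(e_i)_i$ denote the Schauder basis of $X$, $S_N$ the associated partial-sum projection onto $\mathrm{span}\{e_1,\ldots,e_N\}$, and $K$ the basis constant.

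For each fixed $N$, the sequence $(S_N x_n)_n$ lies in a finite-dimensional subspace and is bounded by $K\sup_n\|x_n\|$, hence admits a norm-convergent subsequence. A diagonal argument therefore produces a subsequence, still denoted $(x_n)$, such that $S_N x_n \to z_N$ as $n\to\infty$ for every $N$. Setting $y_n := x_{2n} - x_{2n-1}$, the sequence $(y_n)$ is bounded, satisfies $\|Ty_n\|\geq \varepsilon$ for all $n$, and, crucially, $S_N y_n\to 0$ as $n\to\infty$ for each fixed $N$. One now runs a sliding hump: inductively choose integers $0=m_0<M_0<m_1<M_1<\cdots$ and indices $n_1<n_2<\cdots$ with
\[
\|S_{m_k}\,y_{n_k}\| < 2^{-k} \quad\text{and}\quad \|(I-S_{M_k})\,y_{n_k}\| < 2^{-k},
\]
the first estimate being achieved by choosing $n_k$ large (since $S_{m_k}y_n\to 0$), the second by choosing $M_k$ large (since $S_N x\to x$ pointwise). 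Then $w_k := (S_{M_k}-S_{m_k})\,y_{n_k}$ has support contained in the block $(m_k,M_k]$, and, after discarding any zero terms, $(w_k)$ is a bounded block basic sequence with $\|y_{n_k}-w_k\| < 2^{-k+1}$.

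Hypothesis \eqref{1} then forces $\|Tw_k\|\to 0$, while $\|T(y_{n_k}-w_k)\|\leq \|T\|\cdot 2^{-k+1}\to 0$, giving $\|Ty_{n_k}\|\to 0$ and contradicting $\|Ty_{n_k}\|\geq \varepsilon$. The only delicate point is the interleaved construction: $M_k$ has to be chosen \emph{after} $n_k$ to absorb the tail of $y_{n_k}$, whereas $n_k$ has to be chosen \emph{after} $m_k$ to absorb the head, so the two estimates must be produced in the right order. Everything else is routine: the disjointness of the supports $(m_k,M_k]$ (coming from $M_{k-1}\leq m_k$) guarantees that the $w_k$ are genuine block vectors relative to $(e_i)$, and the possibility that finitely many $w_k$ vanish causes no harm since, if infinitely many $w_k$ were zero, the estimate $\|y_{n_k}-w_k\|<2^{-k+1}$ would force $\|Ty_{n_k}\|\to 0$ along a subsequence, already a contradiction.
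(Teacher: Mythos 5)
Your proof is correct, but it takes a genuinely different route from the paper's. You argue by contrapositive starting from the failure of total boundedness of $T(B_X)$: you extract a sequence with $\|Tx_n-Tx_m\|\geq\varepsilon$, run a diagonal argument on the finite-dimensional projections $S_Nx_n$ to manufacture differences $y_n=x_{2n}-x_{2n-1}$ with $S_Ny_n\to 0$ for each fixed $N$ and $\|Ty_n\|\geq\varepsilon$, and then perform a gliding hump to replace $y_{n_k}$ by true block vectors $w_k$ with $\|y_{n_k}-w_k\|<2^{-k+1}$, contradicting \eqref{1}. The paper instead shows directly that $\|T-T\circ P_n\|\to 0$, i.e.\ that $T$ is a uniform limit of the canonical finite-rank truncations: assuming otherwise, it picks norm-one vectors $z_n$ with $\|(T-T\circ P_{k_n})z_n\|>\delta$ and forms the blocks $y_n=P_{s_n}(z_{j_n})-P_{k_{j_n}}(z_{j_n})$, on which $\|Ty_n\|>\delta/2$. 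The paper's argument is shorter --- no diagonalization and no separate approximation step, since the truncation differences are already block vectors --- and it yields the formally stronger conclusion that $T$ is approximable by finite-rank operators (which matters when $Y$ lacks the approximation property); your argument establishes exactly compactness and has the merit of not invoking the fact that uniform limits of finite-rank operators are compact, at the price of the extra extraction. Your treatment of the delicate points --- choosing $m_k$, then $n_k$, then $M_k$ in that order, and disposing of possibly vanishing $w_k$ --- is correct, so there is no gap; the two proofs are simply organized around different characterizations of compactness.
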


\begin{proof}
Let us denote by $(x_n)_n$ a Schauder basis of $X$ with basis constant $K$, and let us denote by
$$P_n: X \longrightarrow X$$
be the $n$-th projection associate to the basis; i.e., 
$$P_n(\sum_{i=1}^\infty a_i x_i) =   \sum_{i=1}^n a_i x_i.$$
We show that
$$\|T - T\circ P_n\| \stackrel{n \rightarrow \infty}{\longrightarrow} 0;$$
i.e., $T$ is uniform limit of finite rank operators.\smallskip

Let us assume we are not in this situation, that means there exist $\delta>0$, a strictly increasing sequence of natural number $(k_n)_n$ and a sequence of norm-one $(z_n)_n \subseteq X$ such that
$$ \|(T - T\circ P_{k_n})(z_n)\| > \delta.$$ 
Since $z_1 = \lim_n P_n(z_1)$, we can find $s_1 > k_1$ such that
$$\|z_1 - P_{s_1}(z_1)\| < \frac{\delta}{2 \|T\|}.$$
Let us define $y_1 = P_{s_1}(z_1) -  P_{k_1}(z_1)$. Then
\begin{align*}
\|y_1\| &\leq 2 K \ \hbox{and}\\
\|T(y_1)\| &\geq \|(T - T\circ P_{k_1})(z_1)\| - \|(T - T\circ P_{s_1})(z_1)\|\\
& > \frac{\delta}{2}
\end{align*}
Let $k_{j_2} > s_1$. Similarly, we find $s_2 >  k_{j_2}$ such that
$$\|z_{j_2} - P_{s_2}(z_{j_2})\| < \frac{\delta}{2 \|T\|}.$$
Let us define $y_2 = P_{s_2}(z_{j_2} ) -  P_{k_{j_2} }(z_{j_2} )$. Then
\begin{align*}
\|y_2\| &\leq 2 K \ \hbox{and}\\
\|T(y_2)\| & > \frac{\delta}{2}
\end{align*}
Iterating this process we find a bounded block basic sequence $(y_n)_n$ of $Y$ such that $\|T(y_n)\| >\frac{\delta}{2}$, against the assumption (\ref{1}).
\end{proof}

\begin{theorem}
Every bounded linear operator from $\ell_2$ to $X_{a,b}$ and from  $X_{a,b}$ to  $\ell_2$ must be compact.
\end{theorem}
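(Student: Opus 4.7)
The plan is to handle the two directions of the theorem by quite different mechanisms. For $T \in \mathcal{L}(X_{a,b},\ell_2)$, the argument is essentially formal: by property $(\beta)$ above, $X_{a,b}^{*}$ is isomorphic to $\ell_1$, so the adjoint $T^{*} : \ell_2 \to X_{a,b}^{*}$ may be regarded as a bounded operator from $\ell_2$ into $\ell_1$. Pitt's classical theorem, quoted already in the introduction, forces every such operator to be compact, and Schauder's theorem then yields that $T$ itself is compact. This half does not use the refined hypothesis $a^3+b^3=1$ at all.

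The other direction is where the work lies. The condition $a^3+b^3=1$ is imposed precisely so that the exponent $\alpha$ in Proposition \ref{main BD} equals $2/3$, and in particular $\alpha>1/2$; the gap between $2/3$ and $1/2$ is what will supply the contradiction. Suppose $T \in \mathcal{L}(\ell_2,X_{a,b})$ is not compact. Since $\ell_2$ is reflexive, $T$ cannot be completely continuous, so there exists a bounded weakly null sequence $(x_n)$ in $\ell_2$ with $\|T(x_n)\|\geq \delta>0$ for all $n$. A standard gliding-hump argument against the canonical basis of $\ell_2$, together with a summable perturbation, lets me assume, after passing to a subsequence, that $(x_n)$ itself is a block basis of the canonical $\ell_2$-basis, still satisfying $\|T(x_n)\|\geq \delta/2$; orthogonality then furnishes
$$\Bigl\|\sum_{i=1}^{N} x_i\Bigr\|_{\ell_2} \leq M\sqrt{N}, \qquad M := \sup_n \|x_n\|.$$

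Setting $y_n := T(x_n)/\|T(x_n)\|$ produces a normalised weakly null sequence in $X_{a,b}$, so Proposition \ref{main BD} supplies a constant $C>0$ and a subsequence $(y_{k_n})_n$ with $\|\sum_{i=1}^{N} y_{k_i}\| \geq CN^{2/3}$. Passing to one further subsequence I may assume that the scalars $\mu_n := \|T(x_{k_n})\|$ converge so rapidly to some $\mu\in[\delta/2,\|T\|M]$ that $|\mu_n-\mu|<2^{-n}$. The decomposition
$$T\Bigl(\sum_{i=1}^{N}x_{k_i}\Bigr) \;=\; \mu \sum_{i=1}^{N}y_{k_i} \;+\; \sum_{i=1}^{N}(\mu_i-\mu)y_{k_i}$$
has second summand of norm at most $1$, whence $\|T(\sum_{i=1}^{N} x_{k_i})\|\geq \mu CN^{2/3}-1$. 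Combining with the upper bound $\|T(\sum_{i=1}^{N} x_{k_i})\|\leq \|T\|\cdot\|\sum_{i=1}^{N} x_{k_i}\|_{\ell_2}\leq \|T\|M\sqrt{N}$, one obtains $\mu CN^{2/3}-1 \leq \|T\|M\sqrt{N}$ for every $N$, which is absurd.

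The main obstacle I anticipate is the bookkeeping between these steps: extracting a genuine block basis from $(x_n)$ while uniformly keeping the lower bound $\|T(x_n)\|\geq \delta/2$, and then compressing the variable scalars $\mu_n$ into a single constant $\mu$ at the cost of only a summable error. Both operations are standard, but they must be organised cleanly so that the quantitative inequality $N^{2/3}\lesssim N^{1/2}$ genuinely has to hold in the limit; that inequality is exactly what the normalisation $a^3+b^3=1$ is designed to forbid.
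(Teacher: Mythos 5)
Your proposal is correct and follows the same overall route as the paper: both halves are handled by the same mechanisms (duality with $\ell_1$ for operators out of $X_{a,b}$; for operators into $X_{a,b}$, reduction to a bounded block basis $(x_n)$ of $\ell_2$ with $\|T(x_n)\|\geq\delta$, normalisation of the images, Proposition \ref{main BD} with $\alpha=2/3$, and the clash between the lower bound $Cn^{2/3}$ and the orthogonality upper bound $C''n^{1/2}$). The one place where you genuinely diverge is the passage from the lower estimate on $\bigl\|\sum_i T(x_{k_i})/\|T(x_{k_i})\|\bigr\|$ to a lower estimate on $\bigl\|\sum_i T(x_{k_i})\bigr\|$: the paper asserts that $(T(x_n))_n$ is an unconditional basic sequence (because $(x_n)$ is) and rescales the coefficients, a step that is not justified as stated, since bounded operators need not carry unconditional basic sequences to unconditional (or even basic) sequences. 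Your replacement --- extracting a subsequence along which $\mu_n=\|T(x_{k_n})\|$ converges to some $\mu\geq\delta/2$ with summable error, so that $\sum_i(\mu_i-\mu)y_{k_i}$ contributes at most a bounded perturbation --- is more elementary and avoids that issue entirely; this is a real improvement in rigour at the cost of one extra subsequence extraction. One small organisational correction: you should extract the rapidly convergent subsequence of $(\mu_n)$ \emph{before} invoking Proposition \ref{main BD}, since the estimate $\|\sum_{i=1}^N y_{k_i}\|\geq CN^{2/3}$ produced by that proposition is not automatically inherited by further subsequences, whereas the condition $|\mu_{k_n}-\mu|<2^{-n}$ clearly is; with the extractions performed in that order the argument closes without difficulty.
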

\begin{proof}
Since $X_{a,b}^*$  has the Schur property, it is obvious that any operator $X_{a,b}$ to  $\ell_2$ is compact.\smallskip

Suppose  there is  a bounded linear operator 
$$T: \ell_2 \longrightarrow X_{a,b}$$
which is not compact. By the previous proposition, there must exists a  bounded block basic sequence $(x_n)_n \subseteq \ell_2$ such that 
$$\|T(x_n)\| > \delta, \ \hbox{for some} \ \delta>0 .$$
It is worth  to recall that a basis in a Banach space is shrinking if and only if  any bounded block basic sequence is weakly null. Since the basis in $\ell_2$ is shrinking, from one hand we have  that the sequence $(x_n)_n$ is weakly null (and so it is $(T(x_n))_n$). Therefore, the sequence  
$$\left(\frac{T(x_n)}{\|T(x_n)\|}\right)_n \ \hbox{is weakly null.}$$
By Proposition \ref{main BD} and assumption ($iii$) above, unless to pass through a subsequence, there exists $C>0$ such that
$$\left\| \frac{T(x_1)}{\|T(x_1)\|} + \dots + \frac{T(x_n)}{\|T(x_n)\|} \right\| \geq C n^{\frac{2}{3}},$$

On the other hand, since bounded block basic sequence in $\ell_2$ are unconditionally basic sequence (and so it is $(T(x_n))_n$),  it follows also that 
$$\| T(x_1) + \dots + T(x_n) \| \geq C^\prime n^{\frac{2}{3}},$$ 
for some $C^\prime>0$. Therefore,
\begin{align*}
C^\prime n^{\frac{2}{3}} &\leq \| T(x_1) + \dots + T(x_n) \|\\
& \leq \|T\| \|x_1 + \dots + x_n\|_{\ell_2}\\
&\leq \|T\| C^{\prime \prime} n^{\frac12} ,
\end{align*}
where the last inequality follows from the fact that $(x_n)_n$ is a bounded block sequence in $\ell_2$. This would imply that
$$n^{\frac16} \leq \|T\|  \frac{C^{\prime \prime}}{C^\prime}.$$
Namely a contradiction. This completes the proof.
\end{proof}
We would like to finish this note with the following natural questions.
\begin{qst}
\begin{enumerate}
\item[($i$)] Does there exists a pair of Banach spaces $(X, Y)$ such that every $n$-homogeneous polynomial  from $X$ to $Y$ and from  $Y$ to  $X$ must be compact?
\item[($ii$)]  Let $Y$ and $Z$ be reflexive Banach spaces. Does, for every Banach space $X$, the triple $(X, Y, Z)$ has the compact operators transitivity property?
\end{enumerate}
\end{qst}

\subsection*{Acknowledgement} We wish to thank R. Aron for bringing to our attention the question.


\begin{thebibliography}{150}
\bibitem{AAF}
R. Alencar, R.M. Aron, G. Fricke,  Tensor products of Tsirelson's space. {\em Illinois J. Math.} {\bf 31} (1987), no. 1, 17--23. 

\bibitem{AH}
S. A. Argyros and R. G. Haydon, A hereditarily indecomposable $\mathcal{L}_\infty$-space that solves the scalar-plus-compact problem, {\em Acta Math.} {\bf 206} (1) (2011), 1--54.

\bibitem{BD}
J. Bourgain, F. Delbaen, A class of special $\mathcal{L}_\infty$ spaces. {\em Acta Math.} {\bf 145} (1980), no. 3-4, 155--176.

\bibitem{Pitt}
H.R. Pitt, A note on bilinear forms, {\em J. London Math. Soc.} {\bf 11} (1936), 174--180.

\bibitem{Pu}
D. Puglisi, The position of $\mathcal{K}(X,Y)$ in $\mathcal{L}(X,Y)$. {\em Glasg. Math. J.} {\bf 56} (2014), no. 2, 409--417. 

\bibitem{T}
M. Tarbard, Operators on Banach Spaces of Bourgain-Delbaen Type, {\em Ph.D. Thesis}, St John's College, University of Oxford (UK) (2012). 
\end{thebibliography}
\end{document}